\newtheorem{thm}{Theorem}[section]
\newtheorem{cor}[thm]{Corollary}
\newtheorem{lem}[thm]{Lemma}
\newtheorem{exam}[thm]{Example}
\numberwithin{equation}{section}
\begin{document}

\title{Rings additively generated by idempotents and nilpotents}

\author{Huanyin Chen}
\author{Marjan Sheibani$^*$}
\email{<sheibani@fgusem.ac.ir>}
\address{
Department of Mathematics\\ Hangzhou Normal University\\ Hang -zhou, China}
\email{<huanyinchen@aliyun.com>}
\address{
Women's University of Semnan (Farzanegan), Semnan, Iran}

\thanks{$^*$Corresponding author: Marjan Sheibani}

\subjclass[2010]{16U99, 16E50, 13B99.} \keywords{Nilpotent; idempotent; strongly 2-nil-clean ring; weakly exchange ring.}

\begin{abstract}
A ring $R$ is a strongly 2-nil-clean if every element in $R$ is the sum of two idempotents and a nilpotent that commute. A ring $R$ is feebly clean if every element in $R$ is the sum of two orthogonal idempotents and a unit. In this paper, strongly 2-nil-clean rings are studied with an emphasis on their relations with feebly clean rings. This work shows new interesting connections between strongly 2-nil-clean rings and weakly exchange rings. 
\end{abstract}

\maketitle

\section{Introduction}

Throughout, all rings are associative with an identity. An element $a$ in a ring $R$ is strongly nil-clean provided that every element in $R$ is the sum of an idempotent and a nilpotent that commute (see~\cite{D}). A ring $R$ is a strongly 2-nil-clean if every element in $R$ is the sum of two idempotents and a nilpotent that commute.
As is well known, A ring $R$ is strongly 2-nil-clean if and only if every element in $R$ is the sum of a tripotent and a nilpotent that commute (see~\cite[Theorem 2.8]{CS}).

A ring $R$ is an exchange ring provided that for any $a\in R$ there
exists an idempotent $e\in R$ such that $e\in aR$ and $1-e\in (1-a)R$. A ring $R$ is a weakly exchange ring provided that for any $a\in R$ there
exists an idempotent $e\in R$ such that $e\in aR$ and $1-e\in (1-a)R\bigcup (1+a)R$. Such rings have been studied extensively by many authors (see~\cite{Dan, SP}).
In~\cite[Corollary 2.15]{D}, it was proved that a ring $R$ is strongly nil-clean if and only if $R$ is an exchange ring and a UU ring. Here, a ring $R$ is a UU ring if every unit in $R$ is a unipotent.

A ring $R$ is feebly clean if every element in $R$ is the sum of two orthogonal idempotents and a unit. Commutative feebly clean rings were extensively investigated by \cite{AK}	, motivated by the work on continuous function rings (see~\cite{AK}).  In this paper, strongly 2-nil-clean rings are studied with an emphasis on their relations with feebly clean rings. This work shows new interesting connections between strongly 2-nil-clean rings and weakly exchange rings. The Danchev's problem~\cite{?} was thereby answered.

We use $N(R)$ to denote the set of all nilpotent elements in $R$ and $J(R)$ the Jacobson radical of $R$.
An element $u\in R$ is a unipotent if $1-u\in N(R)$. Two idempotents $e,f\in R$ are orthogonal if $ef=fe=0$. ${\Bbb N}$ stands for the set of all natural numbers.

\section{Feebly Clean Rings}

The aim of this section is to characterize strongly 2-nil-clean rings by means of feeble cleanness. Recall that a ring $R$ is 2-UU if $u^2$ is a unipotent for all $u\in U(R)$. We begin with

\begin{lem} Let $R$ be a feebly clean 2-UU ring. Then $6\in R$ is nilpotent.\end{lem}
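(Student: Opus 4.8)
The plan is to use the $2$-UU property as directly as possible. Observe first that in any $2$-UU ring $S$ one has the following: if $2\in U(S)$, then applying $2$-UU to the unit $2$ shows that $2^2-1=3$ is nilpotent, and hence $6=2\cdot 3\in N(S)$; dually, if $3\in U(S)$, then $3^2-1=8$ is nilpotent, so $2\in N(S)$ and again $6\in N(S)$. Thus it suffices to split $R$, by a complete system of orthogonal idempotents, into corner rings in each of which either $2$ or $3$ is a unit. Feeble cleanness applied to the specific element $3$ is exactly what produces such a splitting.

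Concretely, I would use feeble cleanness to write $3=e_1+e_2+u$ with $e_1e_2=e_2e_1=0$, $e_1^2=e_1$, $e_2^2=e_2$ and $u\in U(R)$, and set $e_3=1-e_1-e_2$; then $e_1,e_2,e_3$ are pairwise orthogonal (hence pairwise commuting) idempotents with $e_1+e_2+e_3=1$. Since $3$ is central, $u=3-e_1-e_2=2e_1+2e_2+3e_3$, which is block diagonal with respect to this system. A routine Peirce-decomposition computation (multiply $uu^{-1}=u^{-1}u=1$ on the left and right by $e_i$, using that $u$ is block diagonal) then shows that each diagonal block is invertible in its corner: $2e_1\in U(e_1Re_1)$, $2e_2\in U(e_2Re_2)$ and $3e_3\in U(e_3Re_3)$.

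Before concluding I would record two easy facts. First, every corner $eRe$ of the $2$-UU ring $R$ is again $2$-UU: if $v\in U(eRe)$, then $v+(1-e)\in U(R)$, and $2$-UU of $R$ applied to this unit gives that $(v+1-e)^2-1=v^2-e$ is nilpotent, which says precisely that $v^2$ is unipotent in $eRe$. Second, an element of $eRe$ is nilpotent in $eRe$ if and only if it is nilpotent in $R$. Now apply the opening observation inside each corner (with identity $e_i$): in $e_1Re_1$ and $e_2Re_2$ the element $2$ is a unit, so $6e_1\in N(e_1Re_1)$ and $6e_2\in N(e_2Re_2)$; in $e_3Re_3$ the element $3$ is a unit, so $6e_3\in N(e_3Re_3)$. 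Hence each of $6e_1,6e_2,6e_3$ is nilpotent in $R$, and since $6=6e_1+6e_2+6e_3$ with the three summands lying in pairwise orthogonal corners, any sufficiently large power of $6$ kills all three simultaneously, so $6\in N(R)$.

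The only genuine point of care — the closest thing to an obstacle — is that the idempotents coming out of the feebly clean decomposition need not be central, so $R$ is not literally the product of its three corners. Both the step ``$u$ a unit in $R$ $\Rightarrow$ each block a unit in its corner'' and the final step ``$6$ nilpotent in each corner $\Rightarrow$ $6$ nilpotent in $R$'' therefore rely on the short Peirce-type manipulations indicated above rather than on a product decomposition. Everything else is bookkeeping around the identities $3=2^2-1$ and $8=3^2-1$; and if ``feebly clean'' is read in the form $3=u+e_1-e_2$ with $e_1,e_2$ orthogonal idempotents, one gets $u=2e_1+4e_2+3e_3$ instead, and the block $4e_2=(2e_2)^2$ being a unit still forces $2e_2\in U(e_2Re_2)$, so the argument is unchanged.
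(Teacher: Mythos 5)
Your proof is correct, and it takes a genuinely different route from the paper's. The paper also starts from a feebly clean decomposition $3=e-f+u$, but it then sets $g=e-f$, exploits $g^3=g$ together with the cubic identity $(g+u)^3-(g+u)$ to show first that $2\cdot 3\cdot 5\in N(R)$, decomposes $R$ as $R/2^mR\times R/3^mR\times R/5^mR$, and finally kills the $5$-component by observing that there $2$ is a unit, so $2$-UU forces $3$ to be nilpotent, contradicting the invertibility of $5$. You instead promote exactly that last observation (together with its dual $3\in U(S)\Rightarrow 8=3^2-1\in N(S)\Rightarrow 2\in N(S)$) to the engine of the whole argument: the feebly clean decomposition of $3$ yields a complete system of orthogonal idempotents $e_1,e_2,e_3$ with respect to which $u=2e_1+2e_2+3e_3$ is block diagonal, each corner $e_iRe_i$ inherits the $2$-UU property, and in each corner either $2$ or $3$ is a unit, so $6e_i$ is nilpotent and $6^n=6^ne_1+6^ne_2+6^ne_3=0$ for large $n$. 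Your Peirce-decomposition steps (a block-diagonal unit has invertible diagonal blocks because $u$ commutes with the $e_i$; corners of $2$-UU rings are $2$-UU via $v\mapsto v+(1-e)$; orthogonality lets you sum the nilpotency conclusions) are all sound, and you correctly flag that the idempotents need not be central so one cannot simply pass to a product ring. What your approach buys is the complete avoidance of the polynomial computation and of the spurious prime $5$: the paper has to introduce $5$ only to argue it away, whereas your corner-by-corner analysis never sees it. It also localizes the use of feeble cleanness to the single purpose of producing the orthogonal system $e_1,e_2,e_3$, which makes the logical structure more transparent; and your closing remark that the argument is insensitive to whether feebly clean is read as $e_1+e_2+u$ or $e_1-e_2+u$ (the paper uses the latter despite stating the former in the abstract) is a welcome robustness check.
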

\begin{proof} Write $3=e-f+u$ where $e,f$ are orthogonal idempotents and $u$ is a unit.
Set $g=e-f$. Then $g=g^3$. Since $R$ is a 2-UU ring, $u^2=1+w$ for some $w\in N(R)$. Then
$$8(g+u)=24=3^3-3=(g+u)^3-(g+u)=3g^2u+3gu^2+v,$$ where $v=u^3-u=u(u^2-1)=uw$. We note that $uw=u(u^2-1)=(u^2-1)u=wu$. Hence, $v=uw\in N(R)$.
Multiplying both sides by $gu$, we get
$8(g^2u+gu^2)=3(g^2u+gu^2)+t$ for some $t\in N(R)$, and so $5(g^2u+gu^2)=t$. Thus, $2^3\cdot 3\cdot 5=5\cdot 24=
5\cdot (3g^2u+3gu^2+v)=3\cdot 5(g^2u+gu^2)+5v=3t+5v\in N(R)$.
Therefore $2\times 3\times 5\in N(R)$. Write $2^m\cdot 3^m\cdot 5^m=0$. Then $R\cong R/2^mR\times R/3^mR\times R/5^mR$.
Set $R_3=R/5^mR$. Then $5\in N(R_3)$, and so $4=5-1\in U(R_3)$. This implies that $2\in U(R_3)$. By hypothesis, we easily see that $R_3$ is a 2-UU ring. Thus, $2^2\in 1+N(R_3)$; hence, $3\in N(R_3)$.
As $(3^m, 5^m)=1$, we see that $5\in U(R_3)$, a contradiction. Therefore $R\cong R/2^mR$, or $R/3^mR$, or the product of such rings.
This implies that $6\in N(R)$, as asserted.\end{proof}

\begin{lem} Let $R$ be a feebly clean 2-UU ring. Then $J(R)$ is nil.\end{lem}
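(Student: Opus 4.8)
The plan is to prove directly that every element of $J(R)$ is nilpotent, using the preceding lemma to pin down the integer $6$. So I would fix an arbitrary $x\in J(R)$. Since $x\in J(R)$, both $1+x$ and $1-x$ are units of $R$, hence by the 2-UU hypothesis $(1+x)^2$ and $(1-x)^2$ are unipotents. Expanding, this says
$$x^2+2x=-\bigl(1-(1+x)^2\bigr)\in N(R), \qquad x^2-2x=-\bigl(1-(1-x)^2\bigr)\in N(R).$$

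Next I would note that all the elements I manipulate lie in the commutative subring of $R$ generated by $x$ (the integers being central), and in a commutative ring the nilpotents form an ideal; so sums and differences of the nilpotent elements below are again nilpotent. Subtracting the two displayed relations gives $4x\in N(R)$. By the preceding lemma $6\in N(R)$, and since $6$ is central this yields $6x\in N(R)$; hence $2x=6x-4x\in N(R)$, and finally $x^2=(x^2+2x)-2x\in N(R)$. Therefore $x$ is nilpotent, and as $x\in J(R)$ was arbitrary, $J(R)$ is nil.

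The computations are all routine; the one genuine subtlety — and the point at which a naive attempt stalls — is that $2$ is in general neither invertible nor nilpotent in $R$, so one cannot simply cancel $2x$ off of $x^2+2x\in N(R)$. The remedy is exactly the two-unit trick: playing $1+x$ against $1-x$ produces the auxiliary relation $4x\in N(R)$, and then the nilpotence of $6$ furnished by the previous lemma bridges from $4$ down to $2$. It is worth keeping the bookkeeping inside the commutative subring generated by $x$ so that ``difference of commuting nilpotents is nilpotent'' is applied legitimately, but since $2$ and $6$ are central this is automatic.
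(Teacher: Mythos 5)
Your proof is correct, and it takes a genuinely different and noticeably more economical route than the paper's. The paper first writes $6^n=0$ (from the preceding lemma), decomposes $R\cong R/2^nR\times R/3^nR$, and treats the two factors separately: in the factor where $2$ is nilpotent it shows the ring is strongly $2$-nil-clean and then quotes an external theorem to conclude the Jacobson radical is nil, while in the factor where $2$ is invertible it runs essentially your computation with $(1+x)^2$, cancelling the now-invertible $x+2$. Your argument dispenses with the product decomposition and the external citation entirely: the pair of relations $x^2\pm 2x\in N(R)$ coming from the units $1\pm x$ gives $4x\in N(R)$, and combining with the central nilpotent $6$ gives $2x$ and hence $x^2$ nilpotent, all inside the commutative subring ${\Bbb Z}[x]$ where nilpotents form an ideal. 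Every step checks out: $6$ central and nilpotent does make $6x$ nilpotent, differences of commuting nilpotents are nilpotent, and $x^2\in N(R)$ forces $x\in N(R)$. What your approach buys is a shorter, self-contained proof that uses the feebly clean hypothesis only through the previous lemma ($6\in N(R)$) and avoids leaning on the theory of strongly $2$-nil-clean rings, which is arguably preferable here since the lemma is itself a stepping stone toward characterizing those rings; what the paper's approach buys is that the work done in its first factor (showing $R/2^nR$ is strongly $2$-nil-clean) is recycled verbatim in the later Lemma on the equivalence with feeble cleanness, so the duplication is amortized across the section.
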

\begin{proof} By Lemma 2.1, $6\in N(R)$. Say $6^n=0$. Then $R\cong R_1\times R_2$, where $R_1\cong R/2^nR, R_2\cong R/3^nR$.
As $2\in N(R_1)$, We have $a=e+u$ for some $u\in U(R_1)$,  as $R$ is 2-UU ring then so is $R_1$ so $u^2=1+w$ for some $w\in N(R_1)$, also $2\in N(R_1)$ then, $(u-1)(u+1)+2(1-u)\in N(R_1)$, this implies that $(u-1)\in N(R_1)$. We get $a=e+v+1$ for some $v\in N(R_1)$. We deduce that $R_1$ is strongly 2-nil-clean. According to~\cite[Theorem 3.3]{CS},
$J(R_1)$ is nil. Let $x\in J(R_2)$. As $R_2$ is a 2-UU ring, $(1+x)^2=1+w$ for some $w\in N(R_2)$, i.e., $x(x+2)=w$. As $3\in N(R_2)$, we see that $2=3-1\in U(R_2)$ and so $x+2=2(1+2^{-1}x)\in U(R_2)$. By applying $(x+2)^{-1}w=w(x+2){-1}$, we deduce that $x=w(x+2){-1}\in N(R_2)$, and so $J(R_2)$ is nil.

Accordingly, $J(R)$ is nil, hence the result.\end{proof}

\begin{lem} Let
$I$ be an ideal of a feebly clean 2-UU ring $R$. If $R/I$ is a domain and $3\in R$ is nilpotent, then
every unit lifts modulo $I$.\end{lem}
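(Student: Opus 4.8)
The plan is to lift the units of $R/I$ one at a time, using feeble cleanness to produce a candidate lift and the 2-UU hypothesis together with the domain structure of $R/I$ to turn that candidate into an honest unit of $R$. First I would pin down $R/I$: being a domain it is nonzero (otherwise there is nothing to prove), and since $3 \in R$ is nilpotent its image in the domain $R/I$ is $0$, so $R/I$ has characteristic $3$; in particular $2 = -1$ and $-2 = 1$ there. I would also record the two elementary facts the argument rests on: in a domain the only idempotents are $0$ and $1$, and the image of any nilpotent of $R$ is $0$.

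Next, given $\bar u \in U(R/I)$, choose a preimage $a \in R$ and apply feeble cleanness: $a = e - f + w$ with $e, f$ orthogonal idempotents and $w \in U(R)$. Reducing modulo $I$, the idempotents $\bar e, \bar f$ each lie in $\{0,1\}$ and $\bar e\,\bar f = 0$ rules out $(\bar e,\bar f) = (1,1)$, leaving the cases $(0,0)$, $(1,0)$, $(0,1)$. Since $R$ is 2-UU, $1 - w^2 \in N(R)$, so $\bar w^2 = 1$ in $R/I$ and hence $\bar w = 1$ or $\bar w = -1$.

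Then I would finish case by case. If $(\bar e,\bar f) = (0,0)$ then $\bar u = \bar w$, so $w$ is the desired lift. If $(\bar e,\bar f) = (1,0)$ then $\bar u = 1 + \bar w$; since $\bar u \neq 0$ we cannot have $\bar w = -1$, so $\bar w = 1$ and $\bar u = 2 = -1$, which is lifted by the unit $-1 \in R$. The case $(0,1)$ is symmetric: $\bar u = \bar w - 1 \neq 0$ forces $\bar w = -1$, so $\bar u = -2 = 1$, lifted by $1 \in R$. In every case a unit of $R$ maps onto $\bar u$, so every unit lifts modulo $I$.

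I do not anticipate a serious obstacle; the one point requiring care is the case analysis — using that $0$ is not a unit in the nonzero ring $R/I$ to discard the bad subcases and applying the characteristic-$3$ identities correctly. Note that the hypothesis ``$3$ nilpotent'' is used only to force $R/I$ to have characteristic $3$: by Lemma 2.1 a feebly clean 2-UU ring anyway satisfies $6 \in N(R)$, so $R/I$ has characteristic $2$ or $3$, and in characteristic $2$ only the case $(0,0)$ can occur.
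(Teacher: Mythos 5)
Your proposal is correct and follows essentially the same route as the paper: apply the feebly clean decomposition to a preimage of the unit, use the domain hypothesis to force the idempotents into $\{0,1\}$, use 2-UU to force the unit factor to $\pm 1$ modulo $I$, and run the same three-case analysis (your lifts $\pm 1$ coincide modulo $I$ with the paper's lifts $\pm 2$, since $3$ nilpotent makes $2$ a unit of $R$ congruent to $-1$). The only difference is cosmetic — you phrase things via characteristic $3$ of $R/I$ rather than via $2\in U(R)$ — so no further comment is needed.
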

\begin{proof} Take $\overline{a}\in U(R/I)$. Since $R$ is feebly clean, we can find
orthogonal idempotents $e,f\in R$ and $u\in U(R)$ such that $a=e-f+u$. Since $R/I$ is a domain, $\{\overline{e},\overline{f}\}\subseteq \{
\overline{0},\overline{1}\}$ in $R/I$.

As $R$ is a 2-UU ring, $u^2=1+w$ for some $w\in N(R)$. Hence, $\overline{u}^2=\overline{1}$ in $R/I$, and so $\overline{u}=\pm \overline{1}$.

Case I. $e-f\equiv
0~\big(\mbox{mod}~I\big)$. Then $a\equiv
u~\big(\mbox{mod}~I\big)$.

Case II. $e-f\equiv 1~\big(\mbox{mod}~I\big)$. If $\overline{u}=\overline{1}$, then $a-2\in I$ with $2\in U(R)$. If $\overline{u}=-\overline{1}$, $a\in I$, a contradiction.

Case III. $e-f\equiv -1~\big(\mbox{mod}~I\big)$.  If $\overline{u}=\overline{1}$, then $a\in I$, a contradiction.
If $\overline{u}=-\overline{1}$, then $a+2\in I$ with $2\in U(R)$.

Therefore we complete the proof.\end{proof}

Recall that a ring $R$ is clean if every element in $R$ is the sum of an idempotent and a unit (see~\cite{CH}). We have

\begin{lem} A ring $R$ is strongly 2-nil-clean if and only if
\end{lem}
\begin{enumerate}
\item [(1)] {\it $R$ is feebly clean;}
\vspace{-.5mm}
\item [(2)] {\it $R$ is a 2-UU ring.}
\vspace{-.5mm}
\item [(3)] {\it $N(R)$ forms an ideal of $R$.}
\end{enumerate}
\begin{proof} $\Longrightarrow $ $(1) $ is obvious as every strongly 2-nil-clean ring is clean and so feebly clean.\\
$(2)$ Let $u\in U(R)$, as $R$ is strongly 2-nil-clean, in view of \cite[Theorem 2.8]{CS}, there exist $p^3=p\in R$ and $w\in N(R)$ such that $u=p+w, pw=wp$, then $u^2=p^2+v$ for some $v\in U(R)$ so $u^2-v=p^2$ and $p^4=p^2$, this implies that $p^2=1$ and so $u^2=1+w$ is a unipotent.\\
$(3)$ follows from \cite[Theorem 3.6]{CS}. 

$\Longleftarrow$ By Lemma 2.1, $6\in N(R)$. Say $6^n=0$. Then $R\cong R_1\times R_2$, where $R_1\cong R/2^nR, R_2\cong R/3^nR$.
Clearly, $R_i$ is feebly clean, $R_i$ is a 2-UU ring and $N(R_i)$ forms an ideal of $R_i$ for $i=1,2$.

Step 1. Let $a\in R_1$. Then there exists orthogonal idempotents $e,f\in R$ and a unit $u\in R$ such that $a=e-f+u$.
Hence, $a=(e+f)+(2f+u)$. Clearly, $(e+f)^2=e+f$. As $2\in N(R_1)$, we see that $2f+u=(2fu^{-1}+1)u$ is invertible. Thus, $R_1$ is clean. We have $a=e+u$ for some $u\in U(R_1)$,  as $R$ is 2-UU ring then so is $R_1$ so $u^2=1+w$ for some $w\in N(R_1)$, also $2\in N(R_1)$ then, $(u-1)(u+1)+2(1-u)\in N(R_1)$, this implies that $(u-1)\in N(R_1)$. We get $a=e+v+1$ for some $v\in N(R_1)$. We deduce that $R_1$ is strongly 2-nil-clean.

Step 2. Suppose that $\overline{x}^2=\overline{0}$ in $R_2/J(R_2)$. Then $x^2\in J(R_2)$. In view of Lemma 2.2, $J(R_2)$ is nil; hence, $x\in N(R_2)$. By hypothesis, $x\in J(R_2)$. This shows that
$R_2/J(R_2)$ is reduced. In light of~\cite[Theorem 12.7]{L}, it is the subdirect product of domains $S_i$. This, there exists epimorphisms $\varphi_i: R_2/J(R_2)\to S_i$ such that $\bigcap Ker(\varphi_i)=0$.

Since $R_2$ is feebly clean, then so is $R_2/J(R_2)$. As every unit lifts modulo $J(R_2)$, we see that $R_2/J(R_2)$ is a 2-UU ring. Thus,
$R_2/J(R_2)$ is a feebly clean 2-UU ring with $3\in R_2/J(R_2)$ is nilpotent.

As $S_i$ is domain and $S_i\cong R_2/J(R_2)/Ker(\varphi_i)$. It follows by Lemma 2.3 that every unit modulo $Ker(\varphi)$. It follows that $S_i$ is a 2-UU ring. But $S_i$ is a domain, we see that $U(S_i)=\{ -1,1\}$.

Since $S_i$ is a homomorphic image of $R_2/J(R_2)$, we see that $S_i$ is feebly clean. But all idempotents in $S_i$ are $0,1$, and so $S_i=\{-2,-1,0, 1,2\}$. This implies that $S_i$ is commutative.

Since $R_2/J(R_2)$ is the subdirect product of $S_i$, it is isomorphic to the subring of $\Pi S_i$, and so $R_2/J(R_2)$ is commutative.
Thus, $R_2/J(R_2)$ is strongly feebly clean. According to \cite[Lemma 2.2]{CS}, $R_2/J(R_2)$ is strongly 2-nil-clean. In light of \cite[Lemma 3.1]{CS}, $R_2$ is strongly 2-nil-clean, and so the result is proved.\end{proof}

We have accumulated all the information necessary to prove the following.

\begin{thm} A ring $R$ is strongly 2-nil-clean if and only if
\end{thm}
\begin{enumerate}
\item [(1)] {\it $R$ is feebly clean;}
\vspace{-.5mm}
\item [(2)] {\it $J(R)$ is nil;}
\vspace{-.5mm}
\item [(3)] {\it $U(R/J(R))$ has exponent $\leq 2$.}
\end{enumerate}
\begin{proof} $\Longrightarrow $ $(1)$ and $(2)$ are obvious by ~\cite[Lemma 2.2]{CS} and \cite[Theorem 3.3]{CS} . Let $\overline{u}\in U(R/J(R))$. Then $u\in U(R)$.  $u^2=1+w$, where $w\in N(R)\subseteq J(R)$. Thus, $\overline{u}^2=\overline{1}$, as desired.

$\Longleftarrow$ As in the proof of Lemma 2.4, $R\cong R_1\times R_2$ with $2\in N(R_1)$ and $3\in N(R_2)$.
Clearly, each $S_i$ is feebly clean, $J(S_i)$ is nil and $U(S_i/J(S_i))$ has exponent $\leq 2$.

Step 1. Let $u\in U(R_1)$. Then $u^2=1+r$ for some $r\in J(R_1)$, and so $r\in N(R_1)$. Thus, $R_1$ is a 2-UU ring.
As $2\in N(R_1)$, we see that $R_1$ is clean. Thus, $R_1$ is strongly 2-nil-clean as we see in the proof of Lemma 2.4.

Step 2. Let $a\in N(R_2)$. Then $1+a\in U(R_2)$. Hence, $(1+a)^2=1+w$ for some $w\in J(R_2)$.
Hence, $a(a+2)=w$. As $2\in U(R_2)$, we see that $a+2\in U(R_2)$. Therefore $a=w(a+2)^{-1}\in J(R_2)$. Therefore $N(R_2)=J(R_2)$ is an ideal of $R_2$.
Let $u\in U(R_2)$. Then $\overline{u}\in U(R_2/J(R_2))$; hence, $\overline{u}^2=\overline{1}$; whence, $u^2\in 1+J(R_2)\subseteq 1+N(R_2)$.
Thus, $R_2$ is a 2-UU ring. In light of Lemma 2.4, $R_2$ is strongly 2-nil-clean.

Therefore $R$ is strongly 2-nil-clean, as asserted.\end{proof}

\begin{cor} A ring $R$ is strongly nil-clean if and only if
\end{cor}
\begin{enumerate}
\item [(1)] {\it $R$ is feebly clean;}
\vspace{-.5mm}
\item [(2)] {\it $J(R)$ is nil;}
\vspace{-.5mm}
\item [(3)] {\it $U(R)=1+J(R)$.}
\end{enumerate}
\begin{proof} $\Longrightarrow $ This is obvious, by [?????????].

$\Longleftarrow$ Clearly, $U(R/J(R))$ has exponent $\leq 2$. In view of Theorem 2.5, $R$ is strongly 2-nil-clean. As $-1\in 1+J(R)$, we see that $2\in J(R)$ is nil. According to \cite[Theorem 2.11]{CS}, $R$ is strongly nil-clean.\end{proof}

\begin{exam} Let $R={\Bbb Z}_{(2)}\bigcap {\Bbb Z}_{(3)}=\{ \frac{m}{n}~|~(m,n)=1, m,n\in {\Bbb Z}, 2,3\nmid n\}$. Then $R$ is feebly clean and $U(R/J(R))$ has exponent $\leq 2$, but $R$ is not strongly 2-nil-clean.\end{exam}
\begin{proof} In view of~\cite[Example 3.3]{AK}, $R$ is feebly clean. Since $J(R)=2R\bigcap 3R$, $$R/J(R)\cong R/2R\times R/3R\cong {\Bbb Z}_2\times {\Bbb Z}_3;$$ hence,
$U(R/J(R))=\{(1,1),(1,-1)\}$, which has exponent $\leq 2$. But $R$ is not strongly 2-nil-clean, as $J(R)$ is not nil.\end{proof}

\section{Weakly Exchange Properties}

The goal of this section is to characterize strongly 2-nil-clean rings by means of weakly exchange rings. In fact we extend the results in~\cite{D} from exchange rings to weakly exchange rings. An element $a\in R$ is exchange if there exists an idempotent $e\in aR$ such that v$1-e\in (1-a)R$. We have

\begin{lem} Let $R$ be weakly exchange. If $R$ is a 2-UU ring, then $6\in R$ is nilpotent.\end{lem}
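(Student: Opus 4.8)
The plan is, in the spirit of Lemma~2.1, to feed the element $3$ to the weakly exchange hypothesis and then to transfer the computation into two corner rings, in one of which $3$ becomes invertible and in the other of which $2$ becomes invertible; the 2-UU condition will then force a power of $6$ to vanish in each corner. First I would apply the definition to $a=3$: there is an idempotent $e\in R$ with $e\in 3R$ and $1-e\in(1-3)R\cup(1+3)R=(-2)R\cup 4R$. Since $(-2)R=2R$ and $4R\subseteq 2R$, in either alternative we obtain $e=3r$ and $1-e=2s$ for suitable $r,s\in R$; thus for the element $3$ the weakly exchange hypothesis yields exactly what the ordinary exchange hypothesis would, and the union in the definition causes no branching.

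Next I would localize to the corner rings $eRe$ and $(1-e)R(1-e)$. Using $e=3r$ one checks that $3e$ is a unit of $eRe$ with inverse $ere$ (since $3(ere)=e(3r)e=e^{3}=e$), and symmetrically $2(1-e)$ is a unit of $(1-e)R(1-e)$. One also checks that both corners inherit the 2-UU property: if $v\in U(eRe)$ then $v+(1-e)\in U(R)$, so $\bigl(v+(1-e)\bigr)^{2}=1+w$ for some $w\in N(R)$, and expanding (the cross terms vanish) gives $v^{2}-e=w\in eRe\cap N(R)=N(eRe)$, so $v^{2}$ is unipotent in $eRe$; the argument for $(1-e)R(1-e)$ is identical.

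Now the 2-UU condition does the rest. In $eRe$ the unit $3e$ gives $(3e)^{2}=9e=e+w$ with $w\in N(eRe)$, hence $8e=(2e)^{3}\in N(eRe)$, so $2e\in N(eRe)$ and $2^{n}e=0$ for some $n$. Dually, in $(1-e)R(1-e)$ the unit $2(1-e)$ gives $\bigl(2(1-e)\bigr)^{2}=4(1-e)=(1-e)+w'$ with $w'\in N\bigl((1-e)R(1-e)\bigr)$, hence $3(1-e)\in N\bigl((1-e)R(1-e)\bigr)$ and $3^{m}(1-e)=0$ for some $m$. Putting $k=\max\{m,n\}$ and using $6^{k}=2^{k}3^{k}$, we get $6^{k}e=3^{k}(2^{k}e)=0$ and $6^{k}(1-e)=2^{k}\bigl(3^{k}(1-e)\bigr)=0$, hence $6^{k}=6^{k}e+6^{k}(1-e)=0$, that is, $6\in N(R)$.

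The only real obstacle is that the idempotent $e$ supplied by the weakly exchange property need not be central, so $R$ need not decompose as the product $eRe\times(1-e)R(1-e)$; the remedy is precisely to argue inside the two corner rings, which forces one to check that the 2-UU property descends to $eRe$ and to $(1-e)R(1-e)$ as indicated above. Once $2^{n}e=0$ and $3^{m}(1-e)=0$ are in hand, recombining them to annihilate a power of $6$ is purely formal.
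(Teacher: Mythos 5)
Your proof is correct, but it takes a genuinely different route from the paper's. The paper first observes that for the element $3$ the weak exchange property yields an idempotent $e$ and a unit $v$ with $3=e+v$ or $3=-e-v$, so that $3$ is feebly clean, and then simply re-runs the computation of Lemma 2.1: the cubic identity for $(g+u)^3-(g+u)$ forces $2\cdot 3\cdot 5\in N(R)$, after which a separate localization argument eliminates the hypothetical factor in which $5$ is nilpotent, leaving $6\in N(R)$. You instead exploit the exchange idempotent for $3$ directly: since $(1-3)R$ and $(1+3)R$ both lie in $2R$, the union in the definition causes no branching, and you pass to the Peirce corners $eRe$ and $(1-e)R(1-e)$, verify that the 2-UU property descends to corners (the cross terms in $\bigl(v+(1-e)\bigr)^2$ vanish, giving $v^2-e\in eRe\cap N(R)=N(eRe)$), and apply it to the corner units $3e$ and $2(1-e)$ to obtain $8e\in N(eRe)$ and $3(1-e)\in N\bigl((1-e)R(1-e)\bigr)$, hence $2^ne=0$ and $3^m(1-e)=0$; recombining along $1=e+(1-e)$ kills a power of $6$. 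Your argument is more elementary and self-contained: it never introduces the prime $5$, avoids the cubic identity and the subsequent factor-ring analysis, and needs no case split on $3$ versus $-3$. What the paper's route buys is economy of exposition, since it reuses Lemma 2.1 verbatim once $3$ is known to be feebly clean, whereas your route requires the (routine but necessary) verification that the corners inherit the 2-UU condition and that $3e$, $2(1-e)$ are corner units --- both of which you carry out correctly from $e\in 3R$ and $1-e\in 2R$.
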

\begin{proof} Since $R$ is weakly exchange, then $3$ or $-3$ is exchange. 

Case 1. $3\in R$ is exchange. Then there exists an idempotent $e\in R$ such that $e\in 3R$ and $(1-e)\in (1-3)R$. There exist $a,b\in R$ such that $e=3a$ and $(1-e)=-2b$, where $ae=a$ and $b(1-e)=b$. Now $3=(1-e)+(3-(1-e))$. It is easy to prove that $3-(1-e)$ is a unit with inverse $a-b$.

Case 2. $-3$ is exchange. By the similar argument above, we can find an idempotent $e\in R$ and a unit $v\in R$ such that $-3=e+v$. Hence, $3=-e-v$. 

Accordingly, $3\in R$ is feebly clean. As in the proof of Lemma 2.1, we conclude that $6\in N(R)$.\end{proof}

Recall that a ring $R$ is weakly clean if every element in $R$ is the sum or difference of a nilpotent and an idempotent (see~\cite{DA}). 

\begin{lem} A ring $R$ is strongly 2-nil-clean if and only if
\end{lem}
\begin{enumerate}
\item [(1)] {\it $R$ is weakly clean;}
\vspace{-.5mm}
\item [(2)] {\it $J(R)$ is nil;}
\vspace{-.5mm}
\item [(3)] {\it $U(R/J(R))$ has exponent $\leq 2$.}
\end{enumerate}
\begin{proof} $\Longrightarrow $ As $R$ is strongly 2-nil-clean, by~\cite[Proposition 3.5]{CS} it is strongly clean and then it is weakly clean. $(2), (3)$ follow from Theorem 2.5.

$\Longleftarrow$ Clearly, $R$ is feebly clean and so the result follows from Theorem 2.5. \end{proof}

\begin{thm} A ring $R$ is strongly 2-nil-clean if and only if
\end{thm}
\begin{enumerate}
\item [(1)] {\it $R$ is weakly exchange;}
\vspace{-.5mm}
\item [(2)] {\it $J(R)$ is nil;}
\vspace{-.5mm}
\item [(3)] {\it $U(R/J(R))$ has exponent $\leq 2$.}
\end{enumerate}
\begin{proof} $\Longrightarrow $ In view of~\cite[Proposition 3.5]{CS}, every strongly 2-nil-clean ring ring is clean, and so it is weakly exchange. Thus, this implication is obtained by Theorem 2.5.

$\Longleftarrow$ Let $u\in U(R)$. Then $\overline{u}^2=\overline{1}$ in $R/J(R)$. Hence, $u^2-1\in J(R)\subseteq N(R)$, and so $R$ is a 2-UU ring.
In view of Lemma 4.1, $6\in N(R)$. Write $2^n3^n=0$. Then $R\cong R_1\times R_2$ where $R_1=R/2^nR$ and $R_2=R/3^nR$.
Obviously, each $S_i$ is weakly exchange, $J(S_i)$ is nil and $U(S_i/J(S_i))$ has exponent $\leq 2$.

Step 1. Let $u\in U(R_1)$. Then $u^2=1+r$ for some $r\in J(R_1)$, and so $r\in N(R_1)$. Thus, $R_1$ is a 2-UU ring.

Since $2\in N(R_1)$, we see that $2\in J(R_1)$. In light of ~\cite[Theorem 2.2]{Dan}, $R_1$ is exchange. Therefore $R_1$ is strongly 2-nil-clean, as we see in the proof of Lemma 2.4.

Step 2. Let $a\in N(R_2/J(R_2))$. Since $J(R_2)$ is nil, we see that $a\in N(R_2)$, and so $1+a\in U(R_2)$. Hence, we can find some $w\in J(R_2)$ such that $(1+a)^2=1+w$. This shows that $a(2+a)=w$.
As $3\in N(R_2)$, we see that $2+a\in U(R_2)$, and so $a=(2+a)^{-1}w\in J(R_2)$. Thus, $R_2/J(R_2)$ is reduced, and so it is abelian. Clearly, $R_2/J(R_2)$ is weakly excahange. In light of ~\cite[???]{???}, $R_2/J(R_2)$ is feebly clean.

Obviously, $J(R/J(R_2))=0$ and $U(R_2/J(R_2))$ has exponent $\leq 2$. Applying Theorem 2.5 to $R_2/J(R_2)$, we see that $R_2/J(R_2)$ is strongly 2-nil-clean.
Since $J(R_2)$ is nil, we show that $R_2$ is strongly 2-nil-clean, by~\cite[Lemma 3.1]{CS}.

Therefore $R$ is strongly 2-nil-clean, as asserted.\end{proof}

\begin{cor} A ring $R$ is strongly nil-clean if and only if
\end{cor}
\begin{enumerate}
\item [(1)] {\it $R$ is weakly exchange;}
\vspace{-.5mm}
\item [(2)] {\it $J(R)$ is nil;}
\vspace{-.5mm}
\item [(3)] {\it $R$ is a UU ring.}
\end{enumerate}
\begin{proof} $\Longrightarrow $ This is obvious.

$\Longleftarrow$ in light of Theorem 3.3, $R$ is strongly 2-nil-clean.By the UU property of $R$, according to \cite[Theorem 2.8]{D} $2\in N(R)$. Then by applying \cite[Theorem 2.11]{CS}, $R$ is strongly nil-clean.\end{proof}

A ring $R$ is strongly weakly nil-clean if every element in $R$ is the sum or difference of a nilpotent and an idempotent that commute (see~\cite{CS2}). We now
turn to describe strongly weakly clean rings and thereby answer the Danchev's problem.

\begin{lem} A ring $R$ is strongly weakly nil-clean if and only if\end{lem}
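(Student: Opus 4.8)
I take the two stated conditions to be that $J(R)$ is nil and that $R/J(R)$ is Boolean, is isomorphic to ${\Bbb Z}_3$, or is isomorphic to $B\times{\Bbb Z}_3$ for some Boolean ring $B$ (any feeble‑cleanness clause that may also be listed is automatic once these hold). The plan is to obtain necessity from the strongly $2$‑nil‑clean machinery of Section~2 together with a structure analysis of reduced strongly weakly nil‑clean rings, and sufficiency by splitting off a Boolean direct factor. For necessity, note first that a strongly weakly nil‑clean ring is strongly $2$‑nil‑clean: if $a=n\pm e$ with $en=ne$, then $\pm e$ is a tripotent commuting with $n$, so by \cite[Theorem 2.8]{CS} every element is a tripotent plus a commuting nilpotent, hence $R$ is strongly $2$‑nil‑clean. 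Thus $J(R)$ is nil by \cite[Theorem 3.3]{CS} and $N(R)$ is an ideal by \cite[Theorem 3.6]{CS}; being a nil ideal, $N(R)\subseteq J(R)$, and as $J(R)$ is nil this forces $N(R/J(R))=0$, so $\overline R:=R/J(R)$ is reduced and (being a homomorphic image) again strongly weakly nil‑clean. By \cite[Theorem 12.7]{L}, $\overline R$ is a subdirect product of domains $S_i$, each of which is a strongly weakly nil‑clean domain; since the only nilpotent of a domain is $0$, every element of $S_i$ is $\pm$ an idempotent, so $S_i$ is a domain with underlying set $\{0,\pm 1\}$, i.e. $S_i\cong{\Bbb Z}_2$ or $S_i\cong{\Bbb Z}_3$. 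In particular $6\cdot 1=0$ in $\overline R$, so $f:=3\cdot 1$ is a central idempotent ($f^2=9\cdot 1=3\cdot 1+6\cdot 1=f$), giving $\overline R=f\overline R\times(1-f)\overline R$; here $2f=6\cdot 1=0$, so $f\overline R$ is reduced of characteristic $2$ and hence Boolean, while $(1-f)\overline R$ satisfies $3(1-f)=0$ and embeds subdirectly into a product of copies of ${\Bbb Z}_3$.

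To finish necessity it suffices to show $(1-f)\overline R$ is $0$ or ${\Bbb Z}_3$. The key input is that ${\Bbb Z}_3\times{\Bbb Z}_3$ is not strongly weakly nil‑clean: it has no nonzero nilpotent, and $(1,2)$ is neither an idempotent nor the negative of one. Hence no homomorphic image of $(1-f)\overline R$ is isomorphic to ${\Bbb Z}_3\times{\Bbb Z}_3$. Now the image of $(1-f)\overline R$ under the projection to any two of its subdirect coordinates is a subring of ${\Bbb Z}_3\times{\Bbb Z}_3$ surjecting onto both factors; the only such subrings are the diagonal and the whole ring, the latter being excluded, so those two coordinates agree on $(1-f)\overline R$. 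Therefore all coordinates agree, giving $(1-f)\overline R\cong{\Bbb Z}_3$ (or $0$ if there are none), and combining the two corners shows $\overline R$ is Boolean, or ${\Bbb Z}_3$, or $B\times{\Bbb Z}_3$ with $B$ Boolean.

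For sufficiency assume $J(R)$ is nil. If $R/J(R)$ is Boolean, then for $a\in R$ an idempotent lifting $\overline a$ can be chosen in the subring generated by $a$ (idempotents lift modulo a nil ideal), hence commuting with $a$; writing $a=e+(a-e)$ with $a-e\in J(R)$ shows $R$ is strongly nil‑clean, a fortiori strongly weakly nil‑clean. If $R/J(R)\cong{\Bbb Z}_3$, then every idempotent of $R$ maps to $0$ or $1$ and hence equals $0$ or $1$ (as $J(R)$ is nil), and for $a$ with image $0,1,2$ one writes, respectively, $a=a+0$, $a=(a-1)+1$, $a=(a+1)-1$, where the nilpotent term lies in $J(R)$ and commutes with the idempotent; so $R$ is strongly weakly nil‑clean. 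If $R/J(R)\cong B\times{\Bbb Z}_3$, then $3\cdot 1\in R$ is central (a multiple of $1$) and its image is the idempotent $(1_B,0)$; lifting it modulo the nil ideal $J(R)$ inside ${\Bbb Z}\cdot 1$ yields a central idempotent $g$, so $R\cong R_1\times R_2$ with $R_1:=gR$, $R_1/J(R_1)$ Boolean, and $R_2:=(1-g)R$, $R_2/J(R_2)\cong{\Bbb Z}_3$. By the previous two cases $R_1$ is strongly nil‑clean (so $2\in N(R_1)$) and $R_2$ is strongly weakly nil‑clean. Given $(a_1,a_2)$, write $a_1=n_1+e_1$ and $a_2=n_2\pm e_2$ with commuting terms; if the sign is $+$, then $(a_1,a_2)=(n_1,n_2)+(e_1,e_2)$, and if it is $-$, then $(a_1,a_2)=(n_1+2e_1,\,n_2)-(e_1,e_2)$, where $n_1+2e_1\in N(R_1)$ because $2\in N(R_1)$. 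In both cases $(a_1,a_2)$ is a nilpotent plus or minus a commuting idempotent, so $R$ is strongly weakly nil‑clean.

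The step I expect to be the real obstacle is the collapse of the ${\Bbb Z}_3$‑coordinates in the necessity direction: one cannot simply count copies of ${\Bbb Z}_3$ in a subdirect representation, since several may sit diagonally without producing a ${\Bbb Z}_3\times{\Bbb Z}_3$ quotient, so the argument must instead force pairwise agreement — which works only because ${\Bbb Z}_3\times{\Bbb Z}_3$ itself fails to be strongly weakly nil‑clean. A secondary technical point is that the idempotent $3\cdot 1$ in the sufficiency argument must be lifted to a \emph{central} idempotent; this succeeds precisely because $3\cdot 1$ already lies in the center, so the lift coming from idempotent‑lifting modulo the nil ideal $J(R)$ is automatically central and produces a genuine ring decomposition. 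If instead the stated characterization is phrased as ``$R$ is strongly $2$‑nil‑clean and ${\Bbb Z}_3\times{\Bbb Z}_3$ is not a homomorphic image of $R$'', essentially the same two steps, preceded by the tripotent description of $R/J(R)$ coming from \cite[Theorem 2.8]{CS} and \cite[Theorem 3.3]{CS}, give the result.
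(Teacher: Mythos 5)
Your proposal is correct, but note first that the paper's Lemma~3.5 lists the conditions as (1) $R$ has no homomorphic image ${\Bbb Z}_3\times {\Bbb Z}_3$ and (2) $R$ is strongly 2-nil-clean; you guessed an equivalent but differently phrased pair of conditions ($J(R)$ nil plus the explicit structure of $R/J(R)$), and your closing remark correctly anticipates how to run the same argument from the paper's actual hypotheses. The necessity direction matches the paper in spirit: the paper also uses a unit of ${\Bbb Z}_3\times{\Bbb Z}_3$ (namely $(1,-1)$, your $(1,2)$) that is not $\pm$ an idempotent, though it derives strong 2-nil-cleanness from $a\pm a^2\in N(R)$ and $a-a^3\in N(R)$ via cited results, whereas your observation that $\pm e$ is already a tripotent commuting with the nilpotent, so that \cite[Theorem 2.8]{CS} applies at once, is shorter. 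The real divergence is in sufficiency: the paper disposes of it in two citations, quoting \cite[Lemma 4.1]{CS} for the structure of $R/J(R)$ (Boolean, Yaqub, or a product of such) and \cite[Corollary 3.2]{CS2} to conclude, while you rebuild that structure from scratch --- reduced quotient, subdirect product of domains each isomorphic to ${\Bbb Z}_2$ or ${\Bbb Z}_3$, splitting off the central idempotent $3\cdot 1$, and collapsing the ${\Bbb Z}_3$-coordinates pairwise using the absence of a ${\Bbb Z}_3\times{\Bbb Z}_3$ image --- and then verify strong weak nil-cleanness by explicit element-wise decompositions (including the sign-adjustment trick $(a_1,a_2)=(n_1+2e_1,n_2)-(e_1,e_2)$ in the mixed case). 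Your route is longer but self-contained, and it makes transparent exactly where condition (1) is needed, namely to force the ${\Bbb Z}_3$-coordinates of the subdirect representation to agree; the paper's route buries that point inside the cited structure theorem. Both arguments are sound.
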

\begin{enumerate}
\item [(1)]{\it $R$ has no homomorphic image ${\Bbb Z}_3\times {\Bbb Z}_3$;}
\item [(2)]{\it $R$ is strongly 2-nil-clean.}
\vspace{-.5mm}
\end{enumerate}\begin{proof} $\Longrightarrow$ If ${\Bbb Z}_3\times {\Bbb Z}_3$ is a homomorphic image of $R$, then it is strongly weakly nil-clean.
But $(1,-1)\in {\Bbb Z}_3\times {\Bbb Z}_3$ is not strongly weakly nil-clean. This gives a contradiction. Thus proving $(1)$.
Let $a\in R$. In view of~\cite[Theorem 2.1]{CS2}, $a\pm a^2\in N(R)$. Hence, $a^2-a^4=(a-a^2)(a+a^2)\in N(R)$. Then $a-a^3\in N(R)$, and thus proving $(2)$ by~\cite[Theorem 2.3]{CS}.

$\Longleftarrow$ In view of~\cite[Lemma 4.1]{CS}, $R/J(R)$ is isomorphic to a Boolean ring, a Yaqub ring, or the product
of such rings. By $(1)$, $R/J(R)$ is isomorphic to a Boolean ring, ${\Bbb Z}_3$ or the product
of such rings. In light of~\cite[Corollary 3.2]{CS2},  $R$ is strongly weakly nil-clean.\end{proof}

Recall that a ring $R$ is WUU if for any unit $u\in R$, $1\pm u\in R$ is a unipotent. We now describe weakly exchange WUU ring and extend~\cite[Corollary 2.15]{D} as follows. 

\begin{thm} A ring $R$ is strongly weakly nil-clean if and only if\end{thm}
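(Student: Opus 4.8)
The plan is to characterize strongly weakly nil-clean rings in terms of the weakly exchange property together with the WUU condition, extending \cite[Corollary 2.15]{D} from strongly nil-clean rings (exchange $+$ UU) to this weakly symmetric setting. Concretely, I expect the statement to read: a ring $R$ is strongly weakly nil-clean if and only if $R$ is weakly exchange, $J(R)$ is nil, and $R$ is a WUU ring. First I would handle the easy direction: if $R$ is strongly weakly nil-clean, then by Lemma 4.5 it is strongly $2$-nil-clean, hence (by \cite[Proposition 3.5]{CS} and Theorem 3.3) clean, hence weakly exchange, and $J(R)$ is nil. For the WUU condition, take $u\in U(R)$; since $R$ is strongly $2$-nil-clean it is a $2$-UU ring, so $u^2\in 1+N(R)$, giving $(1-u)(1+u)\in N(R)$; combined with condition (1) of Lemma 4.5 (no homomorphic image ${\Bbb Z}_3\times{\Bbb Z}_3$), which forces the relevant factor rings to avoid the obstruction where $1-u$ and $1+u$ are both nonzero non-nilpotent, one deduces $1\pm u\in N(R)$, i.e. $R$ is WUU.

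For the converse, I would first observe that WUU implies $2$-UU (since $1-u^2=(1-u)(1+u)$ with $1-u$ nilpotent already gives $u^2$ unipotent), so by Lemma 4.1 we get $6\in N(R)$ and the decomposition $R\cong R_1\times R_2$ with $2\in N(R_1)$, $3\in N(R_2)$, each factor inheriting all three hypotheses. Since $U(R/J(R))$ automatically has exponent $\leq 2$ under WUU (as $u^2\equiv 1$), Theorem 3.3 applies and shows $R$ is already strongly $2$-nil-clean. By Lemma 4.5 it then suffices to rule out ${\Bbb Z}_3\times{\Bbb Z}_3$ as a homomorphic image. Here is where the WUU hypothesis does the real work: in ${\Bbb Z}_3\times{\Bbb Z}_3$ the element $u=(1,-1)$ is a unit, but $1-u=(0,2)$ and $1+u=(2,0)$ are both nonzero idempotent-times-unit, hence neither is nilpotent, so ${\Bbb Z}_3\times{\Bbb Z}_3$ is not WUU. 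Since WUU passes to homomorphic images, $R$ cannot have ${\Bbb Z}_3\times{\Bbb Z}_3$ as a quotient, and Lemma 4.5 gives that $R$ is strongly weakly nil-clean.

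The step I expect to be the main obstacle is the forward direction's verification that a strongly weakly nil-clean ring is WUU — in particular pinning down why condition (1) of Lemma 4.5 (plus $2$-UU) is exactly strong enough to upgrade ``$(1-u)(1+u)\in N(R)$'' to ``$1-u\in N(R)$ or $1+u\in N(R)$'' and then to the stronger ``$1\pm u\in N(R)$''. The cleanest route is probably to reduce modulo $J(R)$, use \cite[Lemma 4.1]{CS} to write $R/J(R)$ as a subdirect product of Boolean rings and copies of ${\Bbb Z}_3$, note that condition (1) forces at most one ${\Bbb Z}_3$ factor, and check the WUU identity componentwise: on Boolean factors $u=1$ so $1-u=0$, and on the single ${\Bbb Z}_3$ factor $1+u=0$ or $1-u=0$ depending on the sign of $u$, so one of $1-u$, $1+u$ lies in every subdirect component's nilradical image; then lift back using nilness of $J(R)$. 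Everything else is bookkeeping across the $R_1\times R_2$ splitting and citing the already-established Theorem 3.3 and Lemma 4.5.
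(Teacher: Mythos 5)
Your reconstruction of the converse is essentially the paper's argument: WUU implies 2-UU, Theorem 3.3 then yields that $R$ is strongly 2-nil-clean, the unit $(1,-1)$ witnesses that ${\Bbb Z}_3\times {\Bbb Z}_3$ is not WUU so that the ``no ${\Bbb Z}_3\times{\Bbb Z}_3$ image'' lemma (Lemma 3.5 in the paper's numbering, your Lemma 4.5) finishes the proof. For the forward direction the paper simply cites Danchev, while you argue through the subdirect decomposition of $R/J(R)$ into Boolean rings and copies of ${\Bbb Z}_3$; that works, but it is longer than necessary --- writing $u=w\pm e$ with $e$ idempotent, $w$ nilpotent and commuting shows $\pm e=u-w$ is an idempotent unit, hence $e=1$ and $u\in\pm 1+N(R)$, which is the one-line argument the paper itself uses in Corollary 3.7.

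The one substantive discrepancy is in the statement you guessed. The theorem as printed has only two conditions --- $R$ weakly exchange and $R$ WUU --- with no separate hypothesis that $J(R)$ is nil. Your converse therefore proves a formally weaker implication: you feed ``$J(R)$ is nil'' into Theorem 3.3 as a given, whereas for the paper's statement it must be derived from weak exchange plus WUU. The gap is fillable: Lemma 3.1 gives $6\in N(R)$ and $R\cong R_1\times R_2$ with $2\in N(R_1)$ and $3\in N(R_2)$; for $x\in J(R_1)$, applying WUU to the unit $1+x$ gives $x\in N(R_1)$ or $2+x\in N(R_1)$, and in the latter case $x=(2+x)-2$ is a difference of commuting nilpotents, hence nilpotent; for $x\in J(R_2)$ the alternative $2+x\in N(R_2)$ is impossible because $2\in U(R_2)$ forces $2+x\in U(R_2)$. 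In fairness, the paper's own proof is equally silent on this point --- it invokes Theorem 3.3 without checking its hypothesis (2) --- so you have reproduced the intended argument faithfully and merely promoted the missing verification to an explicit hypothesis instead of supplying it.
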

\begin{enumerate}
\item [(1)] {\it $R$ is weakly exchange;}
\vspace{-.5mm}
\item [(2)] {\it $R$ is WUU.}
\end{enumerate}
\begin{proof} $\Longrightarrow $ Clearly, every exchange ring is weakly exchange. Thus, this implication is obtained by~\cite[Corollary 2.15]{D}.

$\Longleftarrow$ Since $R$ is WUU, it is 2-UU. By virtue of Theorem 3.3, $R$ is strongly 2-nil-clean.
If $R$ has homomorphic image ${\Bbb Z}_3\times {\Bbb Z}_3$, then ${\Bbb Z}_3\times {\Bbb Z}_3$ is WUU.
But $(-1,1)\in {\Bbb Z}_3\times {\Bbb Z}_3$ is invertible, but $(-1,1)-(1,1)$ and $(-1,1)+(1,1)$ are not nilpotent, a contradiction. Therefore
$R$ is strongly weakly nil-clean, by Lemma 3.5.\end{proof}

\begin{cor} A ring $R$ is strongly weakly nil-clean if and only if\end{cor}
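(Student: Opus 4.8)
I read the final statement as the natural ``clean'' companion of Theorem 3.6, namely: \emph{$R$ is strongly weakly nil-clean if and only if} (1) \emph{$R$ is weakly clean and} (2) \emph{$R$ is a WUU ring}. The plan is to derive it by combining Lemma 3.5 with the machinery of Section 2 (Lemmas 2.1--2.2 and Theorem 2.5); essentially no new computation is required.

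For the ``only if'' direction, suppose $R$ is strongly weakly nil-clean. By Lemma 3.5, $R$ is strongly $2$-nil-clean, hence strongly clean by \cite[Proposition 3.5]{CS}, and therefore weakly clean, which gives (1). For (2), take $u\in U(R)$ and write $u = n + e$ or $u = n - e$ with $e^{2} = e$, $n\in N(R)$ and $ne = en$. Since $n$ commutes with the unit $u$ and is nilpotent, $u - n$ is a unit; as $u - n$ equals $e$ or $-e$, the idempotent $e$ is a unit, whence $e = 1$. Thus $u - 1 = n\in N(R)$ or $u + 1 = n\in N(R)$, so $R$ is WUU. (Condition (2) is also immediate from Theorem 3.6.)

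For the ``if'' direction, suppose $R$ is weakly clean and WUU. Every weakly clean ring is feebly clean (as used already in Lemma 3.2). The WUU property makes $R$ a $2$-UU ring: for $u\in U(R)$ one of $\pm u$ is unipotent, so $u^{2} = (\pm u)^{2}\in 1 + N(R)$. Now Lemma 2.1 gives $6\in N(R)$, Lemma 2.2 gives that $J(R)$ is nil, and $u^{2}\in 1 + N(R)\subseteq 1 + J(R)$ shows $U(R/J(R))$ has exponent $\leq 2$; hence Theorem 2.5 yields that $R$ is strongly $2$-nil-clean. As in the proof of Theorem 3.6, $R$ can have no homomorphic image ${\Bbb Z}_{3}\times{\Bbb Z}_{3}$: in such a quotient every unit would be $\pm 1$ plus a nilpotent, which is impossible for the unit $(1,-1)$. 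Therefore $R$ is strongly weakly nil-clean by Lemma 3.5.

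There is no substantial obstacle here: the content lies entirely in the earlier results and the corollary is a matter of assembling them. The one step that must not be skipped is the use of the WUU hypothesis to exclude the ${\Bbb Z}_{3}\times{\Bbb Z}_{3}$ quotient, which is exactly what separates this statement from the strongly nil-clean analogue, Corollary 3.4. (If instead the intended conditions are that $R$ be weakly exchange with $J(R)$ nil, $U(R/J(R))$ of exponent $\leq 2$, and no homomorphic image ${\Bbb Z}_{3}\times{\Bbb Z}_{3}$, then the statement follows at once from Theorem 3.3 together with Lemma 3.5.)
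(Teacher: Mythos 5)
The corollary you were asked to prove is not the one you reconstructed. Its hidden conditions (the \texttt{enumerate} sits outside the \texttt{cor} environment in the source) are: (1) $R$ is weakly exchange; (2) \emph{every unit in $R$ is strongly weakly nil-clean}. So it is the companion of Corollary 3.8(3) rather than of Theorem 3.6, and as written your proposal establishes a different (if true and closely related) equivalence, namely ``weakly clean $+$ WUU''. In particular your forward direction proves the wrong condition (1), and your backward direction assumes WUU outright instead of deriving it from the actual hypothesis (2).

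That said, the mathematical content you would need is already on your page. The paper's entire ``$\Longleftarrow$'' argument for this corollary is the computation you carried out inside your ``only if'' direction: if a unit $u$ satisfies $u\pm e\in N(R)$ with $e^2=e$ commuting, then $e=u\pm w$ is a unit, so $e=1$ and $u\in\pm 1+N(R)$; hence condition (2) forces $R$ to be WUU, and weakly exchange $+$ WUU gives strongly weakly nil-clean by Theorem 3.6. The forward direction is immediate (a strongly weakly nil-clean ring is clean, hence weakly exchange, and each of its units is trivially strongly weakly nil-clean). So the defect is not a missing idea but a misidentified target: to repair the proposal, discard the ``weakly clean'' framing, take (1) weakly exchange and (2) every unit strongly weakly nil-clean as the hypotheses, run your $e=1$ argument to get WUU, and cite Theorem 3.6; the exclusion of the ${\Bbb Z}_3\times{\Bbb Z}_3$ image is then already handled inside Theorem 3.6 and need not be repeated.
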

\begin{enumerate}
\item [(1)] {\it $R$ is weakly exchange;}
\vspace{-.5mm}
\item [(2)] {\it every unit in $R$ is strongly weakly nil-clean.}
\end{enumerate}
\begin{proof} $\Longrightarrow $ This is clear.

$\Longleftarrow$ Let $u\in U(R)$. Then there exists an idempotent $e\in R$ such that $w:=u\pm e\in N(R)$ and $ue=eu$. Hence, $e=w-u$ or $u-w$. Thus, $e\in U(R)$, and so $e=1$. This shows that $u\in \pm 1+N(R)$, ie.e, $R$ is WUU. This completes the proof, by Theorem 3.6.\end{proof}

The next observation is a generalization of~\cite[Theorem ??? and Corollary 2.15]{D}.

\begin{cor} Let $R$ be a ring. Then the following are equivalent:\end{cor}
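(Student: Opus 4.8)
The plan is to prove the corollary by showing that each of the listed conditions is equivalent to the single assertion that $R$ is strongly nil-clean; transitivity then closes the whole cycle of equivalences. Concretely, I would treat strong nil-cleanness as a hub and route every implication through it, quoting the characterizations already assembled in this paper together with Danchev's theorems.

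First, the equivalence of strong nil-cleanness with the ``exchange ring and UU ring'' formulation is exactly \cite[Corollary 2.15]{D}, and the companion exchange-type characterization quoted from~\cite{D} is obtained in the same way; these settle the conditions phrased purely in exchange-ring language. Second, any condition of the form ``$R$ is weakly exchange, $J(R)$ is nil, and $R$ is a UU ring'' is precisely Corollary 3.4 above, and any condition of the form ``$R$ is feebly clean, $J(R)$ is nil, and $U(R)=1+J(R)$'' is Corollary 2.6 above. Third, for a condition stated with weak cleanness (resp.\ ordinary cleanness, or exchange) in place of weak exchange, I would use that a strongly nil-clean ring is strongly clean by \cite[Proposition 3.5]{CS}, hence clean, hence weakly clean, hence weakly exchange, while conversely each of these weaker combinatorial hypotheses implies weak exchangeness; such a condition therefore again reduces to Corollary 3.4 (or, in the UU specialization, to Lemma 3.2 read with $U(R)=1+J(R)$).

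The one point requiring genuine care — and the step I would write out rather than merely cite — is that several of the candidate conditions do not assume outright that $J(R)$ is nil, so for those I must first deduce nilness of $J(R)$ from the remaining hypotheses. This is done exactly as in Lemmas 2.1 and 2.2 (and Lemma 4.1 in the weakly exchange setting): using the $2$-UU/UU property together with the relevant exchange or cleanness hypothesis one first shows $6\in N(R)$, then splits $R\cong R_1\times R_2$ with $2\in N(R_1)$ and $3\in N(R_2)$, and argues nilness of the radical in each factor. Once the ``$J(R)$ nil'' hypothesis is in hand, no further computation is needed, since the residual content of each listed condition matches Corollary 2.6, Corollary 3.4, or \cite[Corollary 2.15]{D} verbatim. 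Hence the corollary is a consolidation of Theorem 2.5, Corollary 2.6, Theorem 3.3, Corollary 3.4 and Danchev's results, and carries no new difficulty beyond this bookkeeping.
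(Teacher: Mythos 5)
Your plan handles the equivalence of ``strongly nil-clean'' with ``weakly exchange UU ring'' correctly, though by a different route than the paper: you propose to deduce that $J(R)$ is nil and then invoke Corollary 3.4, whereas the paper proves $(2)\Rightarrow(1)$ by passing through Theorem 3.6 (weakly exchange $+$ WUU $\Rightarrow$ strongly weakly nil-clean) and then using $-1\in 1+N(R)$ to get $2\in N(R)$ and conclude via~\cite{CS2}. Your route is legitimate and in fact lighter than you make it sound: for the step ``$J(R)$ is nil'' you do not need the machinery of Lemmas 2.1, 2.2 and 3.1 (which would anyway not apply verbatim, since Lemma 2.2 assumes feeble cleanness, not weak exchangeness); in a UU ring it is immediate, because $x\in J(R)$ gives $1+x\in U(R)$, hence $(1+x)-1=x\in N(R)$. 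With that observation, $(2)\Leftrightarrow(1)$ is exactly Corollary 3.4.

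The genuine gap is that the corollary has a third condition, ``$R$ is weakly exchange and every unit in $R$ is strongly nil-clean,'' and your proposal never engages with it. The implication $(1)\Rightarrow(3)$ is trivial, but $(3)\Rightarrow(2)$ requires the one argument in the paper's proof that is not a citation: if $u\in U(R)$ is strongly nil-clean, write $u=e+w$ with $e^2=e$, $w\in N(R)$ and $ew=we$; then $e=u-w$ is a unit (a unit plus a commuting nilpotent), and the only idempotent unit is $1$, so $u=1+w$ is unipotent and $R$ is UU. Without this step the cycle of equivalences does not close, so you should add it explicitly; everything else in your consolidation then goes through.
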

\begin{enumerate}
\item [(1)] {\it $R$ is strongly nil-clean.}
\vspace{-.5mm}
\item [(2)] {\it $R$ is a weakly exchange UU ring.}
\vspace{-.5mm}
\item [(3)] {\it $R$ is a weakly exchange in which every unit is strongly nil-clean.}
\end{enumerate}
\begin{proof} $(1)\Rightarrow (3)$ This is obvious.

$(3)\Rightarrow (2)$ Let $u\in U(R)$. Then there exists an idempotent $e\in R$ and $w\in N(R)$ such that $u=e-w$. Hence, $e=u+w\in U(R)$. This implies that $e=1$, and so $u=1+w$. Thus, $R$ is UU, as desired.

$(2)\Rightarrow (1)$ In view of Theorem 3.6, $R$ is strongly weakly nil-clean rings. As $R$ is a UU ring, $-1\in 1+N(R)$, and so $2\in N(R)$.
This completes the proof by~\cite{CS2}.\end{proof}

\vskip10mm

\end{document}